\newcommand{\ben}{\begin{enumerate}}
\newcommand{\een}{\end{enumerate}}
\newcommand{\bcen}{\begin{center}}
\newcommand{\ecen}{\end{center}}
\newtheorem{theorem}{Theorem}[section]
\newtheorem{definition}[theorem]{Definition}
\newtheorem{corollary}[theorem]{Corollary}
\newtheorem{lemma}[theorem]{Lemma}
\newtheorem{remark}[theorem]{Remark}
\numberwithin{equation}{section}
\begin{document}

\title{Cremmer-Gervais $r$-Matrices and the Cherednik Algebras of Type $GL_2$}
\author{Garrett Johnson}
\address{Department of Mathematics, University of California, Santa Barbara, CA, 93106}
\email{johnson@math.ucsb.edu}

\begin{abstract} 
\noindent  
We give an intepretation of the Cremmer-Gervais $r$-matrices for $\mathfrak{sl}_n$ in terms of actions of elements in the rational and trigonometric Cherednik algebras of type $GL_2$ on certain subspaces of their polynomial representations. This is used to compute the nilpotency index of the Jordanian $r$-matrices, thus answering a question of Gerstenhaber and Giaquinto. We also give an interpretation of the Cremmer-Gervais quantization in terms of the corresponding double affine Hecke algebra.
 \end{abstract}

\maketitle

\section{Introduction}

Let $\mathfrak{sl}_n$ denote the Lie algebra of traceless $n\times n$ matrices having entries in a field $k$. Let $V$ denote the vector representation of $\mathfrak{sl}_n$ and let $r\in\mathfrak{sl}_n\wedge\mathfrak{sl}_n\subset End (V\otimes V)$ be a skew-symmetric linear operator. Define $r_{12}:=r\otimes 1$, $r_{23}:=1\otimes r$, and $r_{13}:=P_{23}r_{12}P_{23}$ where $P_{23}$ is the permutation operator on $V^{\otimes 3}$: $P_{23}(u\otimes v\otimes w)=u\otimes w\otimes v$. An important class of operators which arise in studying Lie bialgebras and Poisson-Lie groups are those satisfying the modified classical Yang-Baxter equation (MCYBE) 
$$[r_{12},r_{13}]+[r_{12},r_{23}]+[r_{13},r_{23}]-\lambda (P_{123}-P_{213}) =0$$
for some $\lambda\in k$ (see \cite{ES}, \cite{CP} for more details). The solutions to the MCYBE are called \emph{classical $r$-matrices} and fall into two classes: those satisfying the MCYBE for $\lambda$ nonzero (resp. zero) are called \emph{quasitriangular} (resp. \emph{triangular}).

In the early 1980's, Belavin and Drinfel'd successfully classified all quasitriangular $r$-matrices in the case when $k$ is the field of complex numbers \cite{BD}. This classification gives us a solution space which we view as a disjoint union of quasi-projective subvarieties of $\mathbb{P}(\mathfrak{sl}_n\wedge\mathfrak{sl}_n)$. 
In contrast, the triangular $r$-matrices are more mysterious as there is not a constructive classification of them (except in the smaller cases of $\mathfrak{sl}_2$ and $\mathfrak{sl}_3$, see \cite{CP}, Chapter 3), only a homological interpretation exists (due to the work of Stolin \cite{St}: for details see \cite{CP}, Section 3.1.D. and \cite{ES}, Section 3.5), and there are currently few known examples.

In the paper \cite{GG}, Gerstenhaber and Giaquinto investigate the behavior along the boundaries of the aforementioned quasi-projective varieties and show that the boundary points are all triangular $r$-matrices.
In the same paper, they construct the most general class of known examples of triangular $r$-matrices, the so-called \emph{generalized Jordanian $r$-matrices} $r_{J,n}$ (see \cite{EH}, \cite{GG}). They prove that the Jordanian $r$-matrices lie on the boundary of the component corresponding to the quasitriangular Cremmer-Gervais $r$-matrices (discussed in \cite{CG}, \cite{EH}, \cite{GG}, \cite{Hodges}) and conjecture that $r_{J,n}^3=0$.

In Theorem \ref{GG Conjecture}, we prove that the nilpotency index of $r_{J,n}$ is quite different than conjectured. 
We do this by first interpreting the quantum Cremmer-Gervais $R$-matrix $R$ in terms of the double affine Hecke algebra (DAHA) ${\mathcal H}\!\!\!{\mathcal H}^{q,t}$ of type $GL_2$ (Theorem \ref{R}) and the classical Cremmer-Gervais $r$-matrix in terms of the degenerate DAHA ${\mathcal H}\!\!\!{\mathcal H}_c^\prime$ (Theorem \ref{rCG}). Using Suzuki's embedding \cite{suzuki-2005} of the rational Cherednik algebra ${\mathcal H}\!\!\!{\mathcal H}_{-c}^{\prime\prime}$ into ${\mathcal H}\!\!\!{\mathcal H}_c^\prime$, we give a simple interpretation of both the Cremmer-Gervais and Jordanian $r$-matrices as operators on the polynomial representation of ${\mathcal H\!\!\! H}_{-c}^{\prime\prime}$.
Using the relations in ${\mathcal H}\!\!\!{\mathcal H}_{-c}^{\prime\prime}$, we find that the nilpotency index of $r_{J,n}$ is $n$ when $n$ is odd, and $2n-1$ when $n$ is even. The conceptual difference between the two cases has a representation theoretic origin: the polynomial representation of the rational Cherednik algebra ${\mathcal H}\!\!\!{\mathcal H}_{c}^{\prime\prime}$ of type $A_1$
is reducible if and only if the deformation parameter $c$ has the form $n/2$ for an odd positive integer $n$ (a special case of Dunkl's theorem \cite{BEG}).\\

\text{}\\
\noindent{\bf Acknowledgements.} 
I am grateful to Milen Yakimov for many helpful discussions. The author was partially supported by NSF grant DMS-0701107.

\section{The Yang-Baxter Equations and the Double Affine Hecke Algebra}\label{quantum}
Let $k$ be a field of characteristic $0$ and let $K=k(q,t^{1/2})$.  
We begin with a 
\begin{definition} (\cite{Ch}, \cite{Ch-book}) The double affine Hecke algebra ${\mathcal H\!\!\!}{\mathcal H}^{q,t}$ of type $GL_2$ is the associative $K$-algebra with generators $X_1^{\pm 1},X_2^{\pm 1},Y_1^{\pm 1},Y_2^{\pm 1},T$ and relations
$$X_jX_j^{-1}=X_j^{-1}X_j=Y_jY_j^{-1}=Y_j^{-1}Y_j=1,$$
$$(T-t^{1/2})(T+t^{-1/2})=0,\,\,\,\,\,\,\,
TX_1T=X_2,\,\,\,\,\,\,\,
TY_2T=Y_1,$$
$$Y_2^{-1}X_1Y_2X_1^{-1}=T^2,\,\,\,\,\,\,\,
Y_1Y_2X_j = qX_jY_1Y_2,$$
$$Y_jX_1X_2=qX_1X_2Y_j,\,\,\,\,\,\,\,
[Y_1,Y_2]=0,\,\,\,\,\,\,\,
[X_1,X_2]=0,$$
for $j=1,2$.
\end{definition} The $K$-vector space $K[X_1^{\pm 1},X_2^{\pm 1}]$ can be made into a ${\mathcal H\!\!\!}{\mathcal H}^{q,t}$-module, called the $polynomial$ $representation$, defined as follows. Let $(12)$ act on $K[X_1^{\pm 1},X_2^{\pm 1}]$ by swapping variables and let $S=\frac{1-(12)}{X_1-X_2}$. For integers $a,b$ define $\Gamma_{a,b}.f(X_1,X_2):=f(q^aX_1,q^bX_2)$. The double affine Hecke algebra ${\mathcal H\!\!\! H}^{q,t}$ acts faithfully on $K[X_1^{\pm 1},X_2^{\pm 1}]$ via
$$T\mapsto t^{1/2}(12)-(t^{1/2}-t^{-1/2})X_2S,$$
$$ Y_1\mapsto T\Gamma_{0,1}(12),
\;\;\;\;\;\;\;\;\;\;Y_2\mapsto \Gamma_{0,1} (12) T^{-1},$$
and the $X$'s act via multiplication.
For an operator $R\in\text{End}_K(K[X_1^{\pm 1},X_2^{\pm 1}])$, let $R_{12}, R_{13},R_{23}$ denote the corresponding operators on $K[X_1^{\pm 1},X_2^{\pm 1},X_3^{\pm 1}]$. In \cite{GG2}, Gerstenhaber and Giaquinto introduced the \emph{modified quantum Yang-Baxter equation} (MQYBE). An operator is called a \emph{modified quantum $R$-matrix} if it satisfies the MQYBE
$$R_{12}R_{13}R_{23}-R_{23}R_{13}R_{12}
-\lambda\left(P_{123}R_{12}-P_{213}R_{23}\right)=0$$
 for some scalar $\lambda$. Here, $P_{ijk}$ denotes the permutation on the variables $X_{i}\mapsto X_j\mapsto X_k\mapsto X_i$. Furthermore, $R$ is called $unitary$ if $R(12)R(12) = 1$.

The classical analogue of the MQYBE is called the \emph{modified classical Yang-Baxter equation} (MCYBE). An operator $r$ is called a \emph{classical $r$-matrix} if it satisfies the MCYBE
$$[r_{12},r_{13}]+[r_{12},r_{23}]+[r_{13},r_{23}]-\lambda (P_{123}-P_{213}) =0$$ for some scalar $\lambda$ (for more details, see ~\cite{CP}). 
Furthermore, $r$ is called \emph{skew-symmetric} if $(12)r(12)=-r$. For shorthand, we will denote the left hand sides of the above equations by $MQYBE_\lambda (R)$ and $MCYBE_\lambda (r)$, respectively. 

\begin{theorem}\label{R} The operator $R=(12)Y_2(12)Y_2^{-1}$ is unitary and satisfies the modified quantum Yang-Baxter equation
$$R_{12}R_{13}R_{23}-R_{23}R_{13}R_{12}
=\left(1-t^{-1}\right)^2\left(P_{123}R_{12}-P_{213}R_{23}\right).$$
\end{theorem}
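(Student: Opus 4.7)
Unitarity is formal. Writing $s=(12)$ so that $R = sY_2sY_2^{-1}$ and using only $s^2=1$ and $Y_2Y_2^{-1}=1$, iterated cancellation in
\[
R(12)R(12) = (sY_2sY_2^{-1}s)(sY_2sY_2^{-1}s)
\]
reduces the expression first to $sY_2s(Y_2^{-1}Y_2)sY_2^{-1}s = sY_2s\cdot sY_2^{-1}s$ and then to $sY_2Y_2^{-1}s = 1$. So this half of the theorem uses nothing beyond group-theoretic cancellation.

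For the MQYBE my plan is to work on the faithful three-variable polynomial representation, in which each $R_{ij}$ is a concrete operator on $K[X_1^{\pm 1},X_2^{\pm 1},X_3^{\pm 1}]$ built from the appropriate analogue of $Y_2$ together with the transposition $(ij)$ in that pair of variables. A first step is to rewrite $R$ so as to separate its Hecke-deformation content from the ``classical'' part: substituting $Y_2 = \Gamma_{0,1}(12)T^{-1}$ and the Demazure-Lusztig formula $T = t^{1/2}(12) - (t^{1/2}-t^{-1/2})X_2 S$ presents $R$ as a manifest perturbation of its $t=1$ value by a term of order $(t^{1/2}-t^{-1/2})$. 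At $t=1$ the operator $R$ reduces to the pure $q$-shift $\Gamma_{1,-1}$, and the corresponding $R_{ij}$'s are diagonal $q$-shifts that commute pairwise, so the two triple products agree and the MQYBE holds trivially with $\lambda=0$. The content of the theorem is thus to measure precisely how the Hecke deformation obstructs commutativity of the triple product.

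The next step is to expand $R_{12}R_{13}R_{23}$ and $R_{23}R_{13}R_{12}$ using the DAHA commutation relations $[Y_i,Y_j]=0$, $Y_1Y_2X_j = qX_jY_1Y_2$, and $TY_2T=Y_1$ (in their three-variable analogues). The guiding principle I would follow is that, modulo the quadratic Hecke relation $(T-t^{1/2})(T+t^{-1/2})=0$, the braid identity in the affine Hecke subalgebra should force the two triple products to coincide; the defect is then produced by applying the quadratic relation twice, yielding a correction proportional to $(t^{1/2}-t^{-1/2})^2$, which up to a unit in $t$ matches the stated $\lambda = (1-t^{-1})^2$ (note $(t^{1/2}-t^{-1/2})^2 = t(1-t^{-1})^2$). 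The principal obstacle will be the combinatorial bookkeeping: both triple products produce many terms once expanded in normal form, and matching the residue with exactly $(1-t^{-1})^2(P_{123}R_{12}-P_{213}R_{23})$ demands a careful choice of ordering convention (say $Y$'s first, then $T$'s, then permutations). A more conceptual alternative would be to lift the identity from the polynomial representation to the DAHA itself, reducing the verification to a single algebraic check using only the defining relations, thereby sidestepping the explicit monomial expansion.
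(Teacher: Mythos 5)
The unitarity half is fine and is all the paper says about it too. The MQYBE half, however, is a plan rather than a proof, and the plan leaves the entire content of the theorem unestablished. Your ``guiding principle'' --- that the braid relations force the two triple products to agree modulo the quadratic Hecke relation, and that two applications of that relation produce a defect equal to exactly $(1-t^{-1})^2\left(P_{123}R_{12}-P_{213}R_{23}\right)$ --- is precisely the statement to be proved. A degree count in $(t^{1/2}-t^{-1/2})$ could at best make the scalar $\lambda=(1-t^{-1})^2$ plausible; it says nothing about why the residual operator is $P_{123}R_{12}-P_{213}R_{23}$ rather than some other expression quadratic in the deformation. You acknowledge that the normal-form expansion is the ``principal obstacle'' and then do not carry it out, and the ``more conceptual alternative'' you gesture at in the last sentence is not identified. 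So as written there is no argument for the Yang--Baxter identity.

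The paper's proof avoids all expansion of triple products by exhibiting structure in $R$ itself: with $v=(X_1+X_2)S$ one has $v^2=0$, $MQYBE_1(v)=0$ and $MCYBE_1(v)=0$, from which $MQYBE_{\lambda^2}(\exp(\lambda v))=0$ follows for every $\lambda$ by inspecting the (only four) orders in $\lambda$; then $R=F_{21}^{-1}\hat{R}F_{12}$ with $F=\Gamma_{0,-1}$ and $\hat{R}=\exp\left((1-t^{-1})v\right)$, and the three twist identities $F_{12}F_{13}F_{23}=F_{23}F_{13}F_{12}$, $\hat{R}_{12}F_{23}F_{13}=F_{13}F_{23}\hat{R}_{12}$, $\hat{R}_{23}F_{12}F_{13}=F_{13}F_{12}\hat{R}_{23}$ transport the MQYBE from $\hat{R}$ to $R$ with the same constant (Kulish--Mudrov). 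Your observation that $R$ degenerates to the $q$-shift $\Gamma_{1,-1}$ at $t=1$ is a shadow of this factorization, but to turn your sketch into a proof you would need either to complete the full normal-form computation you describe, or to find the square-zero element $v$ and the twist $F$ --- which is a different argument from the one you propose.
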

\begin{proof} 
It is obvious that $R$ is unitary. To show $R$ satisfies the MQYBE, we first set $v=(X_1+X_2)S$. This operator satisfies 
$MQYBE_1(v)=0$, $MCYBE_1(v)=0$, and $v^2=0$. Therefore, it follows that $MQYBE_{\lambda^2}(\exp(\lambda v))=0$ for all scalars $\lambda$.
Setting $F=\Gamma_{0,-1}$ and $\hat{R}=\exp((1-t^{-1})v)$, we have $R=F_{21}^{-1}\hat{R}F_{12}$. 
Furthermore
\begin{enumerate}[(i)]\item
$F_{12}F_{13}F_{23}=F_{23}F_{13}F_{12}$,
\item $\hat{R}_{12}F_{23}F_{13}=F_{13}F_{23}\hat{R}_{12}$,
\item  $\hat{R}_{23}F_{12}F_{13}=F_{13}F_{12}\hat{R}_{23}$.
\end{enumerate}
So $R$ is obtained by twisting $\hat{R}$ by $F$  and hence satisfies the MQYBE (c.f. \cite{Kulish-Mudrov}, Thm. 1).
\end{proof}

\section{Semiclassical Limit}\label{semiclassical}

Setting $q=e^h$ and $t=e^{ch}$, we view $K$ as a subfield of $k[[c , h]]$ and the above formulae for the $Y$'s in the polynomial representation become

$$Y_1\mapsto 1+h\left(X_1\frac{\partial}{\partial X_1}+c X_2S+\frac{c}{2}\right)+O(h^2)$$ $$Y_2\mapsto 1+h\left(X_2\frac{\partial}{\partial X_2}-cX_2S-\frac{c}{2}\right)+O(h^2)$$ Define $y_i$ as the coefficient of $h$ in $Y_i$ in the above expressions. The operators $y_1$, $y_2$ obey commutation relations which motivate the following
\begin{definition}\label{def-trig} (see Cherednik \cite{Ch-book}) The degenerate (or trigonometric) double affine Hecke algebra ${\mathcal H\!\!\!}{\mathcal H}^\prime_c$ of type $GL_2$ is the $k(c)$-algebra having generators $y_1$, $y_2$, $X_1^{\pm 1}$, $X_2^{\pm 1}$, and $(12)$ and relations
$$(12)^2=1,\,\,\,\,\,\,\,\,[X_1,X_2]=0,\,\,\,\,\,\,\,\,
\,\,\,\,\,\,\,\, (12)X_1(12)=X_2,$$
$$(12)y_1-y_2(12)=c,\,\,\,\,\,\,\,\,\,\,\,[y_1,y_2]=0,$$
$$[y_i,X_j]=\begin{cases}X_i+c (12)X_1&
\text{if }i=j\\ -c(12)X_1&\text{if }i\neq j\end{cases}$$\end{definition}

\begin{remark} \label{remark1}If $MQYBE_\lambda(1+hr+O(h^2))=0$, then $\lambda$ is of the form $\epsilon h^2+O(h^3)$ for some scalar $\epsilon$ and $MCYBE_\epsilon (r)=0$. Furthermore, if $1+hr+O(h^2)$ is unitary, then $r$ is skew-symmetric.\end{remark}

One can readily obtain the following
\begin{lemma}\label{R expanded}$R=1+h\left(y_1-y_2-c (12)\right)+O(h^2)$\end{lemma}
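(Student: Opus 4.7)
The plan is to expand $R=(12)Y_2(12)Y_2^{-1}$ directly as a power series in $h$, keeping only the constant and first-order terms. Writing $Y_2 = 1 + hy_2 + O(h^2)$ we get $Y_2^{-1} = 1 - hy_2 + O(h^2)$, and so
\begin{align*}
R &= (12)\bigl(1 + hy_2 + O(h^2)\bigr)(12)\bigl(1 - hy_2 + O(h^2)\bigr)\\
  &= \bigl(1 + h(12)y_2(12) + O(h^2)\bigr)\bigl(1 - hy_2 + O(h^2)\bigr)\\
  &= 1 + h\bigl((12)y_2(12) - y_2\bigr) + O(h^2).
\end{align*}
This reduces the lemma to the identity $(12)y_2(12) - y_2 = y_1 - y_2 - c(12)$, i.e.\ to computing $(12)y_2(12)$ in terms of $y_1$.

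For that, I would invoke the defining relation $(12)y_1 - y_2(12) = c$ of ${\mathcal H\!\!\!}{\mathcal H}_c^\prime$ from Definition \ref{def-trig}. Multiplying on the left by $(12)$ and using $(12)^2 = 1$ gives $y_1 - (12)y_2(12) = c(12)$, hence $(12)y_2(12) = y_1 - c(12)$. Substituting back yields
\[
R = 1 + h\bigl(y_1 - c(12) - y_2\bigr) + O(h^2),
\]
which is the desired expansion.

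There is essentially no obstacle here beyond bookkeeping: the content of the lemma is that the first-order term of $R$ in the semiclassical expansion has the predicted form, and it follows mechanically from the semiclassical limit of $Y_2$ introduced above Definition \ref{def-trig} together with a single application of the cross-relation between $(12)$ and the $y_i$. One could alternatively verify the identity directly using the explicit formulas $y_1 = X_1\partial_1 + cX_2S + c/2$ and $y_2 = X_2\partial_2 - cX_2S - c/2$ acting on $K[X_1^{\pm 1},X_2^{\pm 1}]$, but the algebraic derivation via the relation $(12)y_1 - y_2(12) = c$ is cleaner and makes the formula manifestly intrinsic to ${\mathcal H\!\!\!}{\mathcal H}_c^\prime$.
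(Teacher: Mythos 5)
Your proposal is correct. The paper offers no proof of this lemma (it is prefaced only by ``One can readily obtain the following''), and your direct expansion of $(12)Y_2(12)Y_2^{-1}$ to first order in $h$, combined with the identity $(12)y_2(12)=y_1-c(12)$, is exactly the routine computation being left to the reader. The only point worth flagging is that the relation $(12)y_1-y_2(12)=c$ is stated as part of the abstract Definition of ${\mathcal H\!\!\!}{\mathcal H}^\prime_c$, which the paper says is merely \emph{motivated} by the concrete operators $y_1,y_2$; to be fully self-contained you should check that the concrete Dunkl-type operators actually satisfy it, which is a one-line verification using $(12)S(12)=-S$ and $(X_1-X_2)S=1-(12)$ --- or, as you note, bypass the issue entirely by verifying $(12)y_2(12)=y_1-c(12)$ directly from the explicit formulas.
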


Using Remark \ref{remark1} together with Lemma \ref{R expanded}, we have
\begin{corollary} \label{rCG} $R$, $y_1$, $y_2$ as above
\begin{enumerate}[(i)] 
\item$r:=y_1-y_2-c (12)$ is skew-symmetric 
\item $MCYBE_{c^2}(r)=0$. 
\end{enumerate}
\end{corollary}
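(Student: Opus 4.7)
The plan is to deduce both statements directly from the machinery assembled in the preceding sections: Theorem \ref{R} gives unitarity and the MQYBE for $R$, Lemma \ref{R expanded} identifies $r$ as the $h$-linear term of $R$, and Remark \ref{remark1} converts these facts into the classical versions under the substitution $q = e^h$, $t = e^{ch}$.

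First, I would expand the scalar $\lambda = (1 - t^{-1})^2$ appearing on the right-hand side of the MQYBE from Theorem \ref{R} as a power series in $h$. Since $t^{-1} = e^{-ch} = 1 - ch + O(h^2)$, one has
\[
(1 - t^{-1})^2 = (ch + O(h^2))^2 = c^2 h^2 + O(h^3).
\]
Thus the identified scalar $\epsilon$ from Remark \ref{remark1} is exactly $c^2$.

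Next, Lemma \ref{R expanded} tells us that $R = 1 + h r + O(h^2)$ with $r = y_1 - y_2 - c(12)$. Since $R$ is unitary by Theorem \ref{R}, the second sentence of Remark \ref{remark1} immediately gives (i): $r$ is skew-symmetric. For (ii), we apply the first sentence of Remark \ref{remark1} with $\lambda = c^2 h^2 + O(h^3)$, yielding $MCYBE_{c^2}(r) = 0$.

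The argument is essentially a bookkeeping exercise; the only thing requiring care is verifying the power-series expansion of $(1 - t^{-1})^2$ so that the quadratic term matches $c^2$ with the correct sign. Since no step goes beyond extracting the $h^2$-coefficient of an identity already established at the quantum level, there is no substantive obstacle once Theorem \ref{R}, Lemma \ref{R expanded}, and Remark \ref{remark1} are in hand.
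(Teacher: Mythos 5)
Your proposal is correct and follows exactly the paper's route: the paper derives the corollary by combining Remark \ref{remark1} with Lemma \ref{R expanded}, and your expansion $(1-t^{-1})^2 = c^2h^2 + O(h^3)$ is precisely the bookkeeping the paper leaves implicit. Nothing further is needed.
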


In this situation, $r$ is called the \emph{semiclassical limit} of  $R$. Since $r$ is homogeneous, it follows that for any natural number $n$, we can restrict the action of $r$ to the subspace $k(c)^{n,n}$ of $k(c)[X_1^\pm,X_2^\pm]$ spanned by the monomials $X_1^aX_2^b$ with $0\leq a,b\leq n-1$. Doing this yields

$$r_n=2\sum_{1\leq k<l\leq n}(k-l-c)e_{kk}\wedge e_{ll}+2c\sum_{1\leq k<l\leq n}e_{kl}\wedge e_{lk}+4c\sum_{1\leq i<k<j\leq n}e_{i+j-k,j}\wedge e_{ki}.$$

Here, $e_{ij}\wedge e_{kl}$ is the operator
$$X_1^aX_2^b\mapsto \frac{1}{2}\left(\delta_{j,a+1}\delta_{l,b+1}X_1^{i-1}X_2^{k-1}-\delta_{l,a+1}\delta_{j,b+1}X_1^{k-1}X_2^{i-1}\right).$$

\begin{remark} The formula for $r_n$ above suggests that we can view it as being in $\mathfrak{gl}_n\wedge\mathfrak{gl}_n$. Thus, we have a one-parameter family of solutions to the MCYBE over $\mathfrak{gl}_n$. Setting $c=-\frac{n}{2}$ is the only instance that $r_n$ will be in $\mathfrak{sl}_n\wedge\mathfrak{sl}_n$. 
\end{remark}

As mentioned in the introduction, the skew-symmetric solutions to the MCYBE
$$[r_{12},r_{13}]+[r_{12},r_{23}]+[r_{13},r_{23}]-\lambda (P_{123}-P_{213}) =0$$
fall into two classes: those solutions satisfying the MCYBE with $\lambda$ nonzero (resp. zero) are called \emph{quasitriangular} (resp. \emph{triangular}) \emph{$r$-matrices}.
The quasitriangular $r$-matrices (over $\mathbb{C})$  were classified in the early 1980's by Belavin and Drinfel'd using combinatorial objects on the Dynkin graph, called BD-triples \cite{BD}. 
The classification tells us that the solution space of quasitriangular $r$-matrices may be viewed as a disjoint union of quasi-projective subvarieties of $\mathbb{P}(\mathfrak{sl}_n\wedge\mathfrak{sl}_n)$: the aforementioned subvarieties are indexed by the BD-triples.
In this case $r_n$ corresponds to the maximal BD-triple obtained by deleting an extremal node. It is the so-called Cremmer-Gervais r-matrix (discussed in \cite{EH}, \cite{GG}, \cite{Hodges}, \cite{CG}.)
\begin{theorem} \label{rcg} When $c=-\frac{n}{2}$, $r_n$ is the Cremmer-Gervais $r$-matrix.
\end{theorem}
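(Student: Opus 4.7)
My plan is to compare the explicit formula for $r_n$ (at $c=-n/2$) term by term with the Belavin-Drinfel'd formula for the quasitriangular $r$-matrix associated to the Cremmer-Gervais triple, whose discrete data is $\Gamma_1 = \{\alpha_1,\dots,\alpha_{n-2}\}$, $\Gamma_2 = \{\alpha_2,\dots,\alpha_{n-1}\}$, $\tau(\alpha_i)=\alpha_{i+1}$. Recall that for such a triple the BD formula for $\mathfrak{sl}_n$ produces an $r$-matrix of the form
\[
r_0 \;+\; \sum_{\alpha>0} e_{-\alpha}\wedge e_{\alpha} \;+\; 2\!\sum_{\substack{\alpha>0,\, s\ge 1\\ \tau^s(\alpha)>0}} e_{-\alpha}\wedge e_{\tau^s(\alpha)},
\]
where $r_0\in\mathfrak{h}\wedge\mathfrak{h}$ is a distinguished element determined by the compatibility condition $(\tau\otimes 1-1\otimes\tau^{-1})r_0 = \text{(Casimir tail)}$ coming from~\cite{BD}.

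Now, substituting $c=-n/2$ into the formula for $r_n$ given above yields three summands. The middle term $2c\sum_{k<l}e_{kl}\wedge e_{lk}$ specializes to $-n\sum_{k<l}e_{kl}\wedge e_{lk}$, which up to the overall scalar $-n$ is exactly the standard $\sum_{\alpha>0}e_{-\alpha}\wedge e_{\alpha}$ piece. The third term $-2n\sum_{i<k<j}e_{i+j-k,\,j}\wedge e_{k,i}$ is the one that detects the BD-triple: each pair $(e_{k,i},e_{i+j-k,\,j})$ has the form $(e_{-\alpha},e_{-\tau^{\,j-k}(\alpha)})$ for $\alpha=\alpha_i+\cdots+\alpha_{k-1}\in\mathrm{span}\,\Gamma_1^{(j-k)}$, so the sum over $i<k<j$ is precisely the sum over all pairs $(\alpha,s)$ with $\alpha>0$, $s\ge 1$, and $\tau^s(\alpha)>0$. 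This identifies the discrete data of $r_n$ as the Cremmer-Gervais triple.

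What remains is to verify that the Cartan summand $r_0 = 2\sum_{k<l}(k-l+\tfrac{n}{2})\,e_{kk}\wedge e_{ll}$ (after the constant overall normalization fixed in the previous two steps) matches the distinguished Cartan piece of the Cremmer-Gervais $r$-matrix, and to check that $r_n|_{c=-n/2}$ lies in $\mathfrak{sl}_n\wedge\mathfrak{sl}_n$ rather than $\mathfrak{gl}_n\wedge\mathfrak{gl}_n$. The latter is a direct check using $\sum_k e_{kk} = I$ and the arithmetic progression $k-l+n/2$: the contraction with $I\otimes\mathrm{id}$ vanishes precisely when $c=-n/2$, as noted in the remark before the statement. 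For the Cartan matching I would compute $(\tau\otimes 1 - 1\otimes\tau^{-1})r_0$ directly, using $\tau(\alpha_i^\vee)=\alpha_{i+1}^\vee$, and compare with the known value $\sum_{\alpha\in\Gamma_1^\perp}\alpha\otimes(\text{Casimir stuff})$ appearing in the BD system; this is a bookkeeping exercise that reduces to verifying one linear identity among the coefficients $k-l+n/2$.

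I expect the main obstacle to be the Cartan step: although the off-diagonal matching is essentially a combinatorial tautology about the orbits of $\tau$ inside $\mathrm{span}\,\Gamma_1$, the Cartan piece of the Cremmer-Gervais $r$-matrix is traditionally presented in several non-obviously equivalent forms in the literature (\emph{cf.}~\cite{EH}, \cite{GG}, \cite{Hodges}), so the real work is choosing a convenient presentation and checking that the arithmetic-progression coefficients produced by the polynomial representation agree with it, after which the skew-symmetry and MCYBE with $\lambda=c^2=n^2/4$ already established in Corollary~\ref{rCG} pin down $r_n|_{c=-n/2}$ uniquely among quasitriangular $r$-matrices with the Cremmer-Gervais discrete data.
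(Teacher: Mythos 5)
Your route is genuinely different from the paper's, which simply applies the outer automorphism $e_{ij}\mapsto -e_{n+1-j,n+1-i}$ of $\mathfrak{gl}_n$ to the displayed formula for $r_n$, rescales by $-\frac{1}{n}$, sets $c=-\frac{n}{2}$, and observes that the result is literally the formula for $r_{CG}$ printed in \cite{GG}. Your reading of the discrete data off the third summand is essentially right (modulo a sign slip: $e_{i+j-k,\,j}$ is the \emph{positive} root vector $e_{\tau^{j-k}(\alpha)}$, not $e_{-\tau^{j-k}(\alpha)}$), but note that the triple you extract directly from $r_n$ is the mirror image of the one underlying the formula in \cite{GG}; this mismatch of conventions is precisely why the paper's proof passes through the outer automorphism, and you would need to do the same or else fix the opposite convention for the Cremmer-Gervais triple at the outset.

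The genuine gap is in your closing step. For the Cremmer-Gervais triple the Belavin--Drinfel'd continuous datum is not unique once $n\geq 4$: the skew part of $r_0$ ranges over an affine subspace of $\mathfrak{h}\wedge\mathfrak{h}$ of dimension $\binom{n-1}{2}-(n-2)=\tfrac{(n-2)(n-3)}{2}>0$, and every point of that family is skew-symmetric, satisfies $MCYBE_{n^2/4}$, and has the same discrete data. So verifying the compatibility condition on $(\tau\otimes 1-1\otimes\tau^{-1})r_0$ and then appealing to ``skew-symmetry and the MCYBE pin it down uniquely among quasitriangular $r$-matrices with the Cremmer-Gervais discrete data'' does not identify $r_n|_{c=-n/2}$ as \emph{the} Cremmer-Gervais $r$-matrix; it only places it in the correct Belavin--Drinfel'd component. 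To close the argument you must actually equate the Cartan coefficients $2\bigl(k-l+\tfrac{n}{2}\bigr)$ of $e_{kk}\wedge e_{ll}$ (after the automorphism and the $-\frac{1}{n}$ rescaling) with the explicit Cartan part of $r_{CG}$ as written in \cite{GG}; once you are doing that literal comparison, the off-diagonal bookkeeping comes along for free, and you have reproduced the paper's one-line proof.
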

\begin{proof}
Apply the Lie algebra automorphism $e_{ij}\mapsto -e_{n+1-j,n+1-i}$ of $\mathfrak{gl}_n$ to the formula for $r_n$ above, then multiply the result by $-\frac{1}{n}$, and finally set $c = -\frac{n}{2}$; we obtain the same formula for the Cremmer-Gervais $r$-matrix $r_{CG}$ as it appears in \cite{GG}.
\end{proof}

\section{Connections with the Rational Cherednik algebra}\label{rational}
We begin this section by recalling a
\begin{definition} (see Cherednik \cite{Ch-book}) The rational Cherednik algebra (over $k(c)$) ${\mathcal H\!\!\! H}^{\prime\prime}_{c}$ of type $GL_2$ has generators $(12)$, $x_1$, $x_2$, $u_1$, $u_2$ and relations
$$(12)^2=1,\,\,\,\,\,\,\,\,\,\,\,\,\,\,
(12)x_1(12)=x_2,\,\,\,\,\,\,\,\,\,\,\,\,\,\,\,
(12)u_1(12)=u_2,$$
$$[x_1,x_2]=0,\,\,\,\,\,\,\,\,\,\,
\,\,\,\,\,\,\,\,\,\,\,\,\,\,\,\,\,\, [u_1,u_2]=0,$$
$$[u_i,x_j]=\begin{cases}1-c(12)&\text{if }i=j\\
c(12)&\text{if }i\neq j\end{cases}$$
\end{definition}

The polynomial representation $k(c)[x_1,x_2]$ of ${\mathcal H\!\!\! H}^{\prime\prime}_{c}$ is defined where
$(12)$ permutes the variables, $x_1$ and $x_2$ act via multiplication, and the $u_i$ act by the Dunkl operators
$$u_i\mapsto \frac{\partial}{\partial x_i}+c(-1)^iS.$$

In \cite{suzuki-2005}, Suzuki shows that there is an algebra embedding $\psi :{\mathcal H\!\!\! H}^{\prime\prime}_{-c}\to{\mathcal H\!\!\! H}^\prime_c$ defined on generators by
\begin{center}
\begin{tabular}{rl}
$(12)$&$\mapsto (12)$\\
$x_1$&$\mapsto X_1$\\
$x_2$&$\mapsto X_2$\\
$u_1$&$\mapsto X_1^{-1}(y_1+\frac{c}{2}-c (12))$\\
$u_2$&$\mapsto X_2^{-1}(y_2+\frac{c}{2})$\end{tabular}\end{center}

Using this algebra embedding, we see that the Cremmer-Gervais $r$-matrix has an interpretation in the rational Cherednik algebra. Here, $r_n$ corresponds to $x_1u_1-x_2u_2\in{\mathcal H\!\!\!}{\mathcal H}_{-c}^{\prime\prime}$.

In \cite{GG}, Gerstenhaber and Giaquinto provide the largest known class of examples of triangular $r$-matrices, the so-called \emph{generalized Jordanian $r$-matrices} (also discussed in \cite{EH}). They demonstrate that the Jordanian $r$-matrices lie on the boundary of the 
orbit $SL_n.r_n$, where $SL_n$ acts via the adjoint action. One can translate this into the setting of the rational Cherednik algebra. Here,
$$e^{\tau\cdot ad(u_1+u_2)}.(x_1u_1-x_2u_2) = x_1u_1-x_2u_2+\tau(u_1-u_2).$$
Therefore, we have the following
\begin{corollary}\label{rJ} \label{rj} $u_1-u_2\in{\mathcal H\!\!\!}{\mathcal H}_{-c}^{\prime\prime}$ is a boundary solution to the MCYBE (in particular, $MCYBE_0(u_1-u_2)=0$). Restricting its action to the linear subspace $k(c)^{n,n}$ and setting $c=-n/2$ corresponds to the Jordanian $r$-matrix $r_{J,n}$.
\end{corollary}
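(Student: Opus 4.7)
The statement bundles two claims: that $u_1-u_2$ satisfies $MCYBE_0$ in ${\mathcal H\!\!\! H}^{\prime\prime}_{-c}$, and that its restriction to $k(c)^{n,n}$ at $c=-n/2$ is the Jordanian $r_{J,n}$. My strategy hinges on the identity displayed in the excerpt,
$$e^{\tau\cdot ad(u_1+u_2)}(x_1u_1-x_2u_2)=x_1u_1-x_2u_2+\tau(u_1-u_2),$$
which packages $u_1-u_2$ as the leading-order degeneration of the Cremmer--Gervais element $r_n=x_1u_1-x_2u_2$ along the one-parameter subgroup generated by $u_1+u_2$.

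I would first justify the identity from the defining relations of ${\mathcal H\!\!\! H}^{\prime\prime}_{-c}$: the commutators $[u_1+u_2,x_1]=(1-c(12))+c(12)=1$ and $[u_1+u_2,x_2]=1$, together with $[u_1+u_2,u_i]=0$, give $[u_1+u_2,r_n]=u_1-u_2$ and then $[u_1+u_2,u_1-u_2]=0$, so $ad(u_1+u_2)$ is nilpotent of order two on $r_n$ and the exponential terminates after the linear term.

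To extract the MCYBE statement, I would invoke Corollary \ref{rCG}: through Suzuki's embedding, $r_n$ satisfies $MCYBE_{c^2}(r_n)=0$. Conjugation by the invertible operator $\exp(\tau(u_1+u_2))$ on the polynomial representation commutes with the permutations $P_{123}$ and $P_{213}$ on the triple tensor product, and therefore preserves MCYBE with the same $\lambda$. The displayed identity expresses this conjugate as $r_n+\tau(u_1-u_2)$, giving the polynomial identity in $\tau$
$$MCYBE_{c^2}\bigl(r_n+\tau(u_1-u_2)\bigr)=0.$$
The $\lambda$-term $c^2(P_{123}-P_{213})$ is $\tau$-independent, so the coefficient of $\tau^3$ on the left is precisely the classical Yang--Baxter bracket of $u_1-u_2$ with itself; its vanishing is $MCYBE_0(u_1-u_2)=0$.

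For the identification with $r_{J,n}$: restricted to $k(c)^{n,n}$ with $c=-n/2$, the family $r_n+\tau(u_1-u_2)$ lies in the $SL_n$-orbit of $r_{CG}$ by Theorem \ref{rcg}, and its projective limit as $\tau\to\infty$ is the class of $(u_1-u_2)$ restricted to the same subspace. This is a boundary point of $SL_n\cdot r_{CG}$, identified in \cite{GG} with $r_{J,n}$. To pin down the identification, I would compute the restriction explicitly using the Dunkl formula $u_1-u_2=\partial/\partial x_1-\partial/\partial x_2-2cS$: act on the monomial basis $\{x_1^ax_2^b\}_{0\le a,b<n}$, specialize $c=-n/2$, and compare term-by-term with the formula for $r_{J,n}$ in \cite{GG}. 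The MCYBE portion is essentially automatic once the displayed identity is in hand; the principal obstacle is this last combinatorial matching, which reduces to careful bookkeeping of the divided-difference operator $S$ applied to monomials.
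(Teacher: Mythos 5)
Your proposal is correct and takes essentially the same route as the paper, which offers no separate proof beyond the preceding discussion: the displayed identity $e^{\tau\cdot ad(u_1+u_2)}.(x_1u_1-x_2u_2)=x_1u_1-x_2u_2+\tau(u_1-u_2)$ is used to translate Gerstenhaber--Giaquinto's statement that $r_{J,n}$ lies on the boundary of the orbit $SL_n.r_n$ into the rational Cherednik algebra, exactly as you do; your verification of the identity from the relations and your degeneration argument for $MCYBE_0$ are just a fleshed-out version of this. One small slip: $MCYBE_\lambda$ is quadratic in $r$, so in the polynomial identity $MCYBE_{c^2}\bigl(r_n+\tau(u_1-u_2)\bigr)=0$ the classical Yang--Baxter bracket of $u_1-u_2$ with itself appears as the coefficient of $\tau^2$, not $\tau^3$; with that correction the extraction of $MCYBE_0(u_1-u_2)=0$ goes through as you describe.
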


As an operator on the polynomial representation of ${\mathcal H\!\!\! H}_c^{\prime\prime}$, $$u_1-u_2 = \frac{\partial}{\partial x_1}-\frac{\partial}{\partial x_2} -2cS.$$ This gives us a one-parameter family of triangular $r$-matrices. 

In \cite{GG}, Gerstenhaber and Giaquinto conjecture that $r_{J,n}^3=0$. We use the above interpretation to show that this is not true and to compute the nilpotency index. 

\begin{theorem} \label{GG Conjecture}(Gerstenhaber-Giaquinto Conjecture) The nilpotency index of $r_{J,n}\in{\mathcal H\!\!\! H}_{n/2}^{\prime\prime}$ is $n$ when $n$ is odd and $2n-1$ when $n$ is even.\end{theorem}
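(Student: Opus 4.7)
The plan is to diagonalize $r_{J,n}$ by passing to the variables $z_1 = x_1 - x_2$ and $z_2 = x_1 + x_2$. Since $\partial/\partial x_1 - \partial/\partial x_2 = 2\,\partial/\partial z_1$ and $(12)$ negates $z_1$ while fixing $z_2$, a direct computation shows $S(z_1^a z_2^b) = (1 - (-1)^a)z_1^{a-1}z_2^b$. Combining this with the formula $r_{J,n} = \partial/\partial x_1 - \partial/\partial x_2 - nS$ (from just before the statement, specialized to $c = n/2$) yields the diagonal action
\[ r_{J,n}(z_1^a z_2^b) = C_a\, z_1^{a-1} z_2^b, \qquad C_a = \begin{cases} 2a & \text{if }a\text{ is even},\\ 2(a - n) & \text{if }a\text{ is odd}. \end{cases} \]
The coefficient $C_a$ vanishes exactly at $a = 0$, and additionally at $a = n$ when $n$ is odd; the latter is the singular-vector condition underlying reducibility of the polynomial representation of the $A_1$ rational Cherednik algebra referenced in the introduction.

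Next I would expand each spanning monomial of $V_n = k(c)^{n,n}$ in $z$-coordinates via the binomial theorem,
\[ x_1^p x_2^q = 2^{-p-q}\sum_{a=0}^{p+q} c_{p,q,a}\, z_1^a z_2^{p+q-a}, \qquad c_{p,q,a} := [x^a]\,(1+x)^p(1-x)^q, \]
and iterate:
\[ r_{J,n}^k(x_1^p x_2^q) = 2^{-p-q}\sum_{a=k}^{p+q} c_{p,q,a}\Big(\prod_{j=0}^{k-1} C_{a-j}\Big) z_1^{a-k}\, z_2^{p+q-a}. \]
Since distinct values of $a$ contribute linearly independent monomials in $z_1, z_2$, the condition $r_{J,n}^k = 0$ on $V_n$ is equivalent to $c_{p,q,a}\prod_{j=0}^{k-1} C_{a-j} = 0$ for every $0 \le p, q \le n - 1$ and every $k \le a \le p + q$.

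For $n$ even, $C_m$ vanishes only at $m = 0$, so no factor in the product is zero (as $a - j \ge 1$), and the condition reduces to $c_{p,q,a} = 0$ whenever $a \ge k$. The coefficient $c_{n-1, n-1, 2n-2} = (-1)^{n-1} \neq 0$ forces $k \ge 2n - 1$, and $k = 2n-1$ works vacuously since $a \le p + q \le 2n - 2$. For $n$ odd, the extra zero $C_n = 0$ is decisive: for every $a \in \{n, \ldots, 2n - 2\}$ the factor $C_n$ appears at index $j = a - n \in \{0, \ldots, n - 2\}$ in $\prod_{j=0}^{n-1} C_{a-j}$, so $r_{J,n}^n = 0$ on $V_n$. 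Meanwhile $r_{J,n}^{n-1}(x_1^{n-1} x_2^{n-1}) \neq 0$: contributions with $a \geq n$ die by the same mechanism, while the $a = n - 1$ term (for which $n-1$ is even) survives, because $c_{n-1, n-1, n-1} = \binom{n-1}{(n-1)/2}(-1)^{(n-1)/2} \neq 0$ and $\prod_{m=1}^{n-1} C_m \neq 0$ (no index in $\{1, \ldots, n-1\}$ equals $0$ or $n$). The only substantive obstacle is finding this diagonalizing change of variables; after that the two cases follow from elementary combinatorics of $(1+x)^p(1-x)^q$ against the placement of the critical zero $C_n$.
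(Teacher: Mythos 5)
Your proposal is correct and follows essentially the same route as the paper: the paper's proof sets $x=\tfrac12(x_1-x_2)$, $x'=\tfrac12(x_1+x_2)$ (your $z_1,z_2$ up to scaling), derives $(u_1-u_2)x^m=(m-n[[m]])x^{m-1}$ (your $C_a/2$), and then expands the monomials $x_1^ix_2^j$ binomially to locate the surviving and vanishing terms. Your write-up merely makes the coefficients $c_{p,q,a}=[x^a](1+x)^p(1-x)^q$ and the role of the critical zero $C_n$ more explicit than the paper's ``one computes.''
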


\begin{proof}
Set $x=\frac{1}{2}\left(x_1-x_2\right)$, $x^\prime=
\frac{1}{2}\left(x_1+x_2\right)$. We have $[u_1-u_2,x^\prime]=0$ and
$(u_1-u_2)x^m=(m-n[[m]])x^{m-1}$. Here $[[m]]=1$ if $m$ is odd and $[[m]]=0$ otherwise. Observe that
$$x_1^ix_2^j=(-1)^jx^{i+j}+(-1)^j(i-j)x^{i+j-1}x^\prime
+\cdots.$$ One computes that in the case when $n$ is 
odd, we have
$$(u_1-u_2)^{n-1}x_2^{n-1}=(n-1)(-2)(n-3)(-4)\cdots (3-n)(2)(1-n)\neq 0$$
and for all $0\leq i,j\leq n-1$,
$(u_1-u_2)^n(x_1^ix_2^j)
=0$.
In the case when $n$ is even, one computes
$$(u_1-u_2)^{2(n-1)}(x_1^{n-1}x_2^{n-1})
=(2n-2)(n-3)(2n-4)(n-5)\cdots (3-n)(2)(1-n)\neq 0$$
and for all $1\leq i,j\leq n-1$,
$(u_1-u_2)^{2n-1}(x_1^ix_2^j)=0$.
\end{proof}

\noindent\emph{Remarks: }
For $n\geq 2$, the Cremmer-Gervais $r$-matrix $r_n$ is not nilpotent except when $c=-1/2$ and $n=2$. One can see this by
viewing $r_n$ as the operator $$X_1\frac{\partial}{\partial X_1}-X_2\frac{\partial}{\partial X_2}+c\left(X_1+X_2\right)S$$ and verifying $r_n^2(X_1-X_2)=(1+2c)(X_1-X_2)$ and 
$r_n^2(X_1^2-X_2^2)=4(1+c)(X_1^2-X_2^2)$.
So in this particular case, nilpotency is only a boundary condition.
\\\\
The conceptual difference between the even and odd cases in Theorem \ref{GG Conjecture} has a representation theoretic origin: the polynomial representation of the rational Cherednik algebra ${\mathcal H\!\!\! H}^{\prime\prime}_{c}$ of type $A_1$ is reducible if and only if the deformation parameter $c$ has the form $n/2$ for an odd positive integer $n$ (a special case of Dunkl's theorem ~\cite{BEG}).

\bibliographystyle{mrl}
\bibliography{bib}
  \end{document}